\newcommand{\e}{\varepsilon}
\newcommand{\Wreuna}{W^{2-\frac{1}{p},p}(\partial\Om)}
\newcommand{\va}{\enspace}
\newcommand{\Om}{\Omega}
\newcommand{\N}{\mathbb{N}}
\newcommand{\Rnn}{\mathbb{R}^{n}}
\DeclareMathOperator{\spt}{spt}
\DeclareMathOperator{\dist}{dist}
\renewenvironment{proof}[1][\proofname]{%
   \par\pushQED{\qed}\normalfont%
   \topsep6\p@\@plus6\p@\relax
   \trivlist\item[\hskip\labelsep\bfseries#1\@addpunct{.}]%
   \ignorespaces
}{%
   \popQED\endtrivlist\@endpefalse
}
\newtheorem{thm}{Theorem}[section]
\newtheorem{lemma}[thm]{Lemma}
\newtheorem{cor}[thm]{Corollary}
\theoremstyle{definition}
\newtheorem{rmk}[thm]{Remark}
\numberwithin{equation}{section}
\author{Janne Nurminen}
\address{Department of Mathematics and Statistics, University of Jyväskylä}
\email{janne.s.nurminen@jyu.fi}
\begin{document}

\title[Determining an unbounded potential]{Determining an unbounded potential for an elliptic equation with a power type nonlinearity}

	\begin{abstract}
	In this article we focus on inverse problems for a semilinear elliptic equation. We show that a potential $q$ in $L^{n/2+\e}$, $\e>0$, can be determined from the full and partial Dirichlet-to-Neumann map. This extends the results from \cite{MR4269409} where this is shown for Hölder continuous potentials. Also we show that when the Dirichlet-to-Neumann map is restricted to one point on the boundary, it is possible to determine a potential $q$ in $L^{n+\e}$. The authors of \cite{Salo2022} proved this to be true for Hölder continuous potentials.
	
	\vspace{5pt}
	\noindent
	\textbf{Keywords.} Inverse problem, partial data, higher order linearization, semilinear elliptic equation
\end{abstract}
\maketitle

\tableofcontents

\section{Introduction}

In this paper we consider an inverse problem of determining a potential in $L^{\frac{n}{2}+\e}$, for positive $\e$, from the Dirichlet-to-Neumann (DN) map related to the boundary value problem for a semilinear elliptic equation
	\begin{equation}\label{intro mainbvp}
	\left\{\begin{array}{ll}
		\Delta u+qu^m=0, & \text{in}\,\, \Om \\
		u=f, & \text{on}\,\, \partial\Om,
	\end{array} \right.
\end{equation}
where $m\geq2$, $m\in\N$, and $\Om\subset\Rnn$ open and bounded. This boundary value problem is well posed for $q\in L^{\frac{n}{2}+\e}(\Om)$ and a certain class of boundary values. In fact we show that there is $\delta>0$ such that for all (see \cite{Leoni2017} for Sobolev spaces)
\[f\in U_{\delta}:=\{h\in \Wreuna\colon ||h||_{\Wreuna}<\delta\}\]
there exists a unique small solution $u\in W^{2,p}(\Om)$ with sufficiently small norm. Here and in the rest of this article, we denote $p:=\frac{n}{2}+\e$. Thus the DN map can be defined as
\begin{equation*}
	\Lambda_q\colon U_{\delta}\to W^{1-\frac{1}{p},p}(\partial\Om),\quad f\mapsto\partial_{\nu}u_f|_{\partial\Om}.
\end{equation*}

Our first main result shows that we can determine the potential from the knowledge of the DN map.
\begin{thm}\label{mainthm}
	Let $\Om\subset\Rnn$, $n\geq2$, be a bounded open set with $C^{\infty}$ boundary, $\e>0$ and $q_1,q_2\in L^{\frac{n}{2}+\e}(\Om)$. Let $\Lambda_{q_j}$ be the DN maps associated to the boundary value problems
	\begin{equation}\label{mainbvp}
		\left\{\begin{array}{ll}
			\Delta u+q_ju^m=0, & \text{in}\,\, \Om \\
			u=f, & \text{on}\,\, \partial\Om,
		\end{array} \right.
	\end{equation}
	for $j=1,2,$ and assume that $\Lambda_{q_1}f=\Lambda_{q_2}f$ for all $f\in U_{\delta}$ with $\delta>0$ sufficiently small.
	Then $q_1=q_2$ in $\Om$.
\end{thm}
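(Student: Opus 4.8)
The plan is to use the higher order linearization method: I would choose boundary data depending on $m$ small real parameters, differentiate both the equation and the identity $\Lambda_{q_1}=\Lambda_{q_2}$ $m$ times, distill an integral identity for products of harmonic functions, and then separate the potentials by inserting complex exponential solutions and inverting the Fourier transform. \textbf{First linearization.} Fix $f_1,\dots,f_m\in\Wreuna$ and set $f=\sum_{k=1}^m\e_k f_k$ with $\e=(\e_1,\dots,\e_m)$ small, so that $f\in U_\delta$. Let $u^{(j)}=u^{(j)}(\cdot;\e)$ denote the solution of \eqref{mainbvp} for the potential $q_j$. Granting that the solution map $\e\mapsto u^{(j)}$ is $C^m$ into $W^{2,p}(\Om)$ --- which I expect to be available from the well-posedness discussed before the theorem, via the implicit function theorem --- I put $v_k^{(j)}=\partial_{\e_k}u^{(j)}|_{\e=0}$. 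Because $m\geq 2$, the derivative of $q_j u^m$ in $u$ vanishes at $u=0$, so each $v_k^{(j)}$ is harmonic with boundary value $f_k$; in particular it is independent of $j$, and I write $v_k:=v_k^{(j)}$.

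\textbf{The $m$-th order linearization and Green's identity.} Applying $\partial_{\e_1}\cdots\partial_{\e_m}$ at $\e=0$, only the term in which each of the $m$ factors of $u^m$ is hit exactly once survives at $u=0$, producing the coefficient $m!$. Hence $w^{(j)}:=\partial_{\e_1}\cdots\partial_{\e_m}u^{(j)}|_{\e=0}$ solves
\[\Delta w^{(j)}+m!\,q_j\,v_1\cdots v_m=0\ \text{ in }\Om,\qquad w^{(j)}=0\ \text{ on }\partial\Om,\]
the boundary value vanishing since $f$ is linear in $\e$ and $m\geq 2$. Differentiating $\Lambda_{q_1}f=\Lambda_{q_2}f$ the same number of times gives $\partial_\nu w^{(1)}=\partial_\nu w^{(2)}$ on $\partial\Om$. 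Thus $w:=w^{(1)}-w^{(2)}$ has vanishing Cauchy data and satisfies $\Delta w=-m!\,(q_1-q_2)\,v_1\cdots v_m$. Pairing with an arbitrary harmonic function $v_0$ and applying Green's identity, the boundary contributions drop out, leaving
\[\int_\Om (q_1-q_2)\,v_0 v_1\cdots v_m\,dx=0\]
for all harmonic functions $v_0,v_1,\dots,v_m$ on $\Om$. The integral converges because $q_1-q_2\in L^{p}(\Om)\subset L^1(\Om)$ while the $v_\ell$ can be taken bounded on $\Om$.

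\textbf{Separation of the potentials.} Finally I would substitute the harmonic exponentials $v_\ell=e^{x\cdot\rho_\ell}$ with $\rho_\ell\in\C^n$ satisfying $\rho_\ell\cdot\rho_\ell=0$. Given $\xi\in\Rnn$, the vectors can be chosen --- already with $n\geq 2$ and $m+1\geq 3$ of them at our disposal --- so that $\sum_{\ell=0}^m\rho_\ell=i\xi$. The identity then becomes
\[\int_\Om(q_1-q_2)\,e^{ix\cdot\xi}\,dx=0\]
for every $\xi$, i.e.\ the Fourier transform of the extension by zero of $q_1-q_2$ vanishes identically, whence $q_1=q_2$ in $\Om$.

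The algebra of the linearization and the concluding Fourier argument are essentially those of the smooth theory; the genuine difficulty, and the point where the hypothesis $q\in L^{\frac n2+\e}$ is felt, is the low regularity. One must verify that, for a potential of this integrability, the problem \eqref{mainbvp} is solvable with $C^m$ dependence on the parameters and that every linearized solution lies in $W^{2,p}(\Om)$, so that the traces entering Green's identity are meaningful and $\Delta w\in L^p(\Om)$. Securing these mapping and regularity properties in the scale of Sobolev spaces at the exponent $p=\frac n2+\e$ is, I expect, the main obstacle.
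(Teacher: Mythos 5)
Your proposal is correct and follows essentially the same route as the paper: higher order linearization in the parameters, the vanishing-Cauchy-data Green's identity yielding $\int_\Om (q_1-q_2)\,v_0v_1\cdots v_m\,dx=0$, and then Calder\'on-type exponentials (two nontrivial isotropic vectors, the remaining factors constant) to kill the Fourier transform of $q_1-q_2$. The only cosmetic differences are that the paper tests against $v\equiv 1$ rather than a general harmonic $v_0$, and, as you correctly flag, the genuinely new content at this regularity is the $W^{2,p}$ well-posedness and smooth parameter dependence, which the paper establishes separately (its Theorem \ref{wellposed}) exactly as you assume.
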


This result is a special case of Theorem \ref{secondthm} but we give a proof because it is helpful for the other two main theorems of this paper. Also the proof of Theorem \ref{mainthm} gives a reconstruction formula for the potential $q$ via the Fourier transform (see Corollary \ref{corollary reconstruction}).

The proof Theorem \ref{mainthm} is quite similar as in \cite{MR4188325} and it uses the method of higher order linearization first introduced in \cite{MR3802298} and further developed in the works \cite{MR4104456}, \cite{MR4188325}. The key ingredient in this proof is the following integral identity which characterizes the $m$-th order linearization of the DN map $(D^m\Lambda_q)_0$ at $0$ \cite[Proposition $2.2$]{MR4188325}:
\begin{equation}\label{intro int identity}
	\int_{\partial\Om}(D^m\Lambda_{q_1}-D^m\Lambda_{q_2})_0(f_1,\ldots,f_m)f_{m+1}\,dS = -(m!)\int_{\Om}(q_1-q_2)v_{f_1}\cdots v_{f_{m+1}}\,dx.
\end{equation}
Here $v_{f_k}$ are solutions to $\Delta v_{f_k}=0$ with boundary values $v_{f_k}|_{\partial\Om}=f_k$. Using this integral identity together with a result on density of products of solutions eventually gives $q_1=q_2$ in $\Om$.

Theorem \ref{mainthm} has been proved for Hölder continuous potentials in \cite{MR4104456} and \cite{MR4188325} but in this article we give a first result for a less regular potential (at least to the best of our knowledge). The difference is in proving that \eqref{mainbvp} is well-posed when the potential is in $L^{p}(\Om)$ and defining the DN map as a map from $U_{\delta}$ to $W^{1-\frac{1}{p},p}(\partial\Om)$.

In the linear case $(\Delta+q)u=0$, when $n\geq3$, a similar result for $q\in L^{\frac{n}{2}}(\Om)$ has been obtained in the works \cite{Nachman1992}, \cite{Chanillo1990} and in a more general Riemannian manifold setting in \cite{Ferreira2013}, where they used $L^p$ Carleman estimates in their proof. The case $q\in L^{\frac{n}{2}}(\Om)$ is considered optimal in the sense of standard well-posedness theory and for the strong unique continuation principle \cite{Jerison1985}. There are also results when one assumes that $q\in W^{-1,n}(\Om)$, see for example \cite{Haberman2018}. When $n=2$ the lowest regularity for the potential to have uniqueness in the inverse problem, at least to the best of our knowledge, is $L^{\frac{4}{3}}(\Om)$ \cite{Blåsten2020}. The same result is true on compact Riemannian surfaces with smooth boundary \cite{Ma2020}. In dimension two the unique continuation principle holds for potentials in $L^p(\Om)$ where $p>1$ (see for example \cite{Alessandrini2012}, \cite{Amrein81}).

In addition to the full data case, we consider some partial data results for the Schrödinger equation with unbounded potentials. In particular, let $\Gamma$ be an open subset of the boundary $\partial\Om$. Define the partial Dirichlet-to-Neumann map for $f\in U_{\delta}$, $\spt(f)\subset\Gamma$, as 
\[\Lambda_q^{\Gamma}f=\partial_{\nu}u|_{\Gamma}.\] 
Then from the knowledge of this partial DN map it is possible to determine the potential.

\begin{thm}\label{secondthm}
	Let $\Om\subset\Rnn$, $n\geq2$, be a connected open and bounded set with $C^{\infty}$ boundary and let $\Gamma\neq\emptyset$ be an open subset of the boundary $\partial\Om$. Let $\e>0$, $q_1,q_2\in L^{\frac{n}{2}+\e}(\Om)$ and $\Lambda_{q_j}^{\Gamma}$ be the partial DN maps associated to the boundary value problems
	\begin{equation*}
		\left\{\begin{array}{ll}
			\Delta u+q_ju^m=0, & \text{in}\,\, \Om \\
			u=0, & \text{on}\,\, \partial\Om\setminus\Gamma\\
			u=f, & \text{on}\,\, \Gamma
		\end{array} \right.
	\end{equation*}
	for $j=1,2$. Assume that 
	\begin{equation*}
		\Lambda_{q_1}^{\Gamma}f=\Lambda_{q_2}^{\Gamma}f
	\end{equation*}
	for all $f\in U_{\delta}$ with $\spt(f)\subset\Gamma$, where $\delta>0$ sufficiently small. Then $q_1=q_2$ in $\Om$.
\end{thm}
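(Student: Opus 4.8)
The plan is to use the method of higher order linearization to reduce the nonlinear partial data problem to a single orthogonality relation for $q_1-q_2$ against products of harmonic functions that vanish on $\partial\Om\setminus\Gamma$, and then to invoke a partial data density result for such products. The assumption $\e>0$ enters in a decisive but soft way: with $p=\tfrac{n}{2}+\e$ one has $2-\tfrac{n}{p}=\tfrac{4\e}{n+2\e}>0$, so the Sobolev embedding $W^{2,p}(\Om)\hookrightarrow L^\infty(\Om)$ holds and every harmonic function with boundary value in $\Wreuna$ is bounded.

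First I would carry out the linearization exactly as for Theorem \ref{mainthm}. By the well-posedness recalled in the introduction, for $f=\sum_{k=1}^m\e_k f_k$ with each $f_k$ supported in $\Gamma$ and the $|\e_k|$ small there is a unique small solution $u_f\in W^{2,p}(\Om)$ depending smoothly on $(\e_1,\dots,\e_m)$. Setting $v_k:=\partial_{\e_k}u_f|_{\e=0}$ and differentiating $\Delta u+q_ju^m=0$ once, the nonlinear term contributes $q_j\,m\,u^{m-1}\partial_{\e_k}u$, which vanishes at $u=0$ because $m\ge 2$; hence $\Delta v_k=0$ and $v_k|_{\partial\Om}=f_k$, so in particular $v_k|_{\partial\Om\setminus\Gamma}=0$. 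The linearizations first interact at order $m$, and differentiating the boundary value problem $m$ times yields the same integral identity as \eqref{intro int identity}, now with a test function $f_{m+1}$ also supported in $\Gamma$:
\begin{equation*}
\int_{\partial\Om}(D^m\Lambda^{\Gamma}_{q_1}-D^m\Lambda^{\Gamma}_{q_2})_0(f_1,\dots,f_m)\,f_{m+1}\,dS=-(m!)\int_{\Om}(q_1-q_2)\,v_1\cdots v_{m+1}\,dx.
\end{equation*}
Since $f_{m+1}$ is supported in $\Gamma$, the left-hand side is determined by the partial DN maps on $\Gamma$ and vanishes by hypothesis. Moreover each $v_k\in L^\infty(\Om)$ by the embedding above, so the product $v_1\cdots v_{m+1}$ is bounded and the right-hand integral is well defined for $q_1-q_2\in L^{p}(\Om)\subset L^1(\Om)$; this integrability, together with the existence of the $m$-th order derivative of the solution operator at this regularity, is the point that must be verified carefully and is the novelty compared with the Hölder case of \cite{Salo2022}. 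We are left with
\begin{equation*}
\int_{\Om}(q_1-q_2)\,v_1\cdots v_{m+1}\,dx=0
\end{equation*}
for all harmonic $v_1,\dots,v_{m+1}$ vanishing on $\partial\Om\setminus\Gamma$.

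It remains to show that this forces $q_1=q_2$, and this partial data density step is where I expect the main difficulty to lie. For the full data situation of Theorem \ref{mainthm} (the case $\Gamma=\partial\Om$) one simply takes $v_1=e^{\rho_1\cdot x}$, $v_2=e^{\rho_2\cdot x}$ with $\rho_i\in\C^n$, $\rho_i\cdot\rho_i=0$, $\rho_1+\rho_2=i\xi$, and $v_3=\dots=v_{m+1}=1$, to read off the Fourier transform of $q_1-q_2$ at an arbitrary $\xi\in\Rnn$. Under the partial data constraint every factor must in addition vanish on $\partial\Om\setminus\Gamma$, so the constant and exponential solutions are not directly admissible. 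The plan is instead to invoke the density of products of harmonic functions vanishing on $\partial\Om\setminus\Gamma$, the analogue for the partial boundary of the density result used for Theorem \ref{mainthm}: their $\R$-linear span is dense in $L^{p'}(\Om)$, $p'=\tfrac{p}{p-1}$. Granting this, $q_1-q_2\in L^p(\Om)$ is orthogonal to a dense subset of $L^{p'}(\Om)$, whence $q_1=q_2$ in $\Om$.

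The hard part is thus establishing this density for the partial boundary. It rests on constructing harmonic functions that vanish on $\partial\Om\setminus\Gamma$ yet behave in the interior like complex exponentials — the analogue for the Laplacian of complex geometric optics with partial data — and on the connectedness of $\Om$, used through unique continuation to propagate the vanishing of $q_1-q_2$ from a neighbourhood of $\Gamma$ to all of $\Om$. This is the place where the geometry of $\Gamma$ genuinely matters; the role of the present argument is to confirm that, because $\e>0$ renders every harmonic factor bounded, the scheme already available for Hölder potentials in \cite{Salo2022} continues to pair correctly against the unbounded potential $q_1-q_2\in L^{n/2+\e}(\Om)$.
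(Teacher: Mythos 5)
Your linearization steps match the paper's: the first-order linearizations are harmonic with data supported in $\Gamma$, the intermediate ones vanish, and the $m$-th order linearization plus the hypothesis $\Lambda_{q_1}^{\Gamma}=\Lambda_{q_2}^{\Gamma}$ yield the orthogonality relation $\int_{\Om}(q_1-q_2)\,v_1\cdots v_{m+1}\,dx=0$ for harmonic factors vanishing on $\partial\Om\setminus\Gamma$ (your $v_{m+1}$ plays the role of the paper's auxiliary function $v^{(0)}$). The genuine gap is the step you yourself flag as ``the hard part'': you need the span of products of \emph{$m+1$} harmonic functions, all vanishing on $\partial\Om\setminus\Gamma$, to be dense in $L^{p'}(\Om)$, and you only say ``granting this.'' No such $(m+1)$-fold density theorem is available; the known partial-data density result (Theorem 1.1 of \cite{Ferreira2009}, due to Dos Santos Ferreira, Kenig, Sj\"ostrand and Uhlmann) concerns products of exactly \emph{two} harmonic functions vanishing on $\partial\Om\setminus\Gamma$, and its proof goes through analytic microlocal analysis, not through the route you sketch. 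Indeed, your proposed construction --- harmonic functions that vanish on $\partial\Om\setminus\Gamma$ yet behave like complex exponentials in the interior, followed by unique continuation of the vanishing of $q_1-q_2$ --- is exactly what one cannot do directly, and it is not how any of the cited works proceed. As written, the proof is therefore incomplete at its decisive step.

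The paper closes this gap with a soft device that your outline is missing: positivity. Fix $g, f_3,\ldots,f_m\in C_c^{\infty}(\Gamma)$ nonnegative and not identically zero; by the strong maximum principle (this is where connectedness of $\Om$ enters, not through a separate unique continuation argument) the corresponding harmonic extensions $v^{(0)}, v^3,\ldots,v^m$ vanish on $\partial\Om\setminus\Gamma$, are strictly positive in $\Om$, and are bounded since $W^{2,p}(\Om)\hookrightarrow C^{0,\alpha}(\bar{\Om})$ for $p=\frac{n}{2}+\e$. Absorbing them into the unknown, set $F:=m!\,(q_1-q_2)\,v^{(0)}v^3\cdots v^m\in L^{p}(\Om)$; the orthogonality relation then reads $\int_{\Om}F\,v^1v^2\,dx=0$ for all harmonic $v^1,v^2$ vanishing on $\partial\Om\setminus\Gamma$, which is precisely the two-factor setting of Theorem 1.1 in \cite{Ferreira2009}. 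That theorem gives $F\equiv0$, and the strict positivity of $v^{(0)},v^3,\ldots,v^m$ forces $q_1=q_2$ in $\Om$. If you want to repair your argument, replace your conjectural $(m+1)$-fold density by this positivity-plus-pairs reduction; this is the actual content of the paper's proof, following \cite{MR4052205} and \cite{MR4269409} (which, rather than \cite{Salo2022}, are the H\"older-continuous precedents for this theorem; \cite{Salo2022} is the precedent for the single-boundary-point result, Theorem \ref{thirdthm}).
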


When the potentials are assumed to be Hölder continuous, then this theorem has been proved in \cite{MR4052205} and \cite{MR4269409} using the method of higher order linearization, which we will also use. Here again the key ingredients are the integral identity \eqref{intro int identity} and a density result for solutions of the Laplacian \cite{Salo2022} (see also \cite[Section 4]{Carstea2021}).

For the linear Schrödinger equation, partial data results with unbounded potentials have been proved only for special cases of partial data. When $n\geq3$, it is proved in \cite{Chung2020} that from the knowledge of the partial DN map in a specific situation it is possible to determine a potential in $L^{\frac{n}{2}}(\Om)$. The authors use a method involving the construction of a Dirichlet Green's function for the conjugated Laplacian. In a similar situation on a manifold setting, \cite{Tzou2018} shows that a potential in $L^{\frac{n}{2}}$ can be determined from a particular case of partial data. When $n=2$ the best known result for the case of an arbitrary open subset of the boundary is for potentials in the Sobolev space $W^{1,p}(\Om)$, for $p>2$ \cite{Imanuvilov2012}.

For partial data results, there is still the case when we are restricted to only one point on the boundary. In the situation of $\Delta u+qu^m$ with the potential $q$ in $C^{\alpha}(\bar{\Om})$ this has been proved in \cite{Salo2022} using the method of higher order linearization. Here we show that the same result holds even if we only assume that $q\in L^{n+\e}(\Om)$ for a positive $\e$.

\begin{thm}\label{thirdthm}
	Let $\Om\subset\Rnn$, $n\geq2$, be a connected open and bounded set with $C^{\infty}$ boundary and let $\Gamma\neq\emptyset$ be an open subset of the boundary $\partial\Om$. Suppose that $\mu\not\equiv0$ is a fixed measure on $\partial\Om$ and let $\e>0$. Assume that $q_1, q_2\in L^{n+\e}(\Om)$ satisfy 
	\begin{equation}\label{measure eq}
		\int_{\partial\Om}\Lambda_{q_1}(f)\,d\mu=\int_{\partial\Om}\Lambda_{q_2}(f)\,d\mu
	\end{equation}
	for all $f\in U_{\delta}$ with $\spt(f)\subset\Gamma$, where $\delta>0$ sufficiently small. Then $q_1=q_2$ in $\Om$. Thus when choosing $\mu=\delta_{x_0}$ for some fixed $x_0\in\partial\Om$ the condition
	\begin{equation*}
		\Lambda_{q_1}(f)(x_0)=\Lambda_{q_2}(f)(x_0)\quad \text{for all}\va f\in U_{\delta}\va\text{with}\va \spt(f)\subset\Gamma
	\end{equation*}
	gives $q_1=q_2$ in $\Om$.
\end{thm}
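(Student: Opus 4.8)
The plan is to run the method of higher order linearization exactly as in the proof of Theorem~\ref{secondthm}, but to let the measure $\mu$ play the role of the outermost boundary test function. First I would take boundary data $f=\sum_{k=1}^m\epsilon_k f_k$ with $f_k\in U_\delta$ and $\spt(f_k)\subset\Gamma$, insert the corresponding small solution $u_f\in W^{2,p}(\Om)$ into \eqref{measure eq}, and apply $\partial_{\epsilon_1}\cdots\partial_{\epsilon_m}|_{\epsilon=0}$. The smooth dependence of $u_f$ on $\epsilon$ coming from well-posedness yields
\[\int_{\partial\Om}(D^m\Lambda_{q_1})_0(f_1,\dots,f_m)\,d\mu=\int_{\partial\Om}(D^m\Lambda_{q_2})_0(f_1,\dots,f_m)\,d\mu,\]
where $(D^m\Lambda_{q_j})_0(f_1,\dots,f_m)=\partial_\nu w_j|_{\partial\Om}$ and $w_j$ solves $\Delta w_j=-m!\,q_j\,v_{f_1}\cdots v_{f_m}$ in $\Om$ with $w_j|_{\partial\Om}=0$, the $v_{f_k}$ being harmonic with $v_{f_k}|_{\partial\Om}=f_k$. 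Writing $w=w_1-w_2$, this says $\int_{\partial\Om}\partial_\nu w\,d\mu=0$ with $\Delta w=-m!\,(q_1-q_2)v_{f_1}\cdots v_{f_m}$ and $w|_{\partial\Om}=0$.

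Next I would convert this single boundary condition into an interior integral identity carrying a fixed weight. Let $G$ be the Dirichlet Green's function of $\Delta$ on $\Om$ and $P(x,z)=\partial_{\nu_z}G(x,z)$ the Poisson kernel, so that $\partial_\nu w(z)=\int_\Om P(x,z)\Delta w(x)\,dx$. Defining the harmonic extension of $\mu$,
\[\psi(x)=\int_{\partial\Om}P(x,z)\,d\mu(z),\]
which is harmonic in $\Om$ and, since $\mu\not\equiv0$, not identically zero, and interchanging the order of integration, I obtain the reduced identity
\[\int_\Om(q_1-q_2)\,\psi\,v_{f_1}\cdots v_{f_m}\,dx=0\]
for all admissible $f_1,\dots,f_m$. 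Justifying this Fubini step, and the finiteness of the integral, is exactly where the hypothesis $q_j\in L^{n+\e}$ enters and is the main point of the argument: for $\mu=\delta_{x_0}$ one has $|\psi(x)|\lesssim|x-x_0|^{1-n}$, which lies in $L^s(\Om)$ precisely for $s<\frac{n}{n-1}$, and since $(n+\e)'=\frac{n+\e}{n+\e-1}<\frac{n}{n-1}$, Hölder's inequality gives $(q_1-q_2)\psi\in L^1(\Om)$; the factor $v_{f_1}\cdots v_{f_m}$ is bounded because $W^{2,p}(\Om)\hookrightarrow L^\infty(\Om)$ for $p=\frac n2+\e$. This is the threshold that the Hölder-continuous case of \cite{Salo2022} never has to face, and $L^{n+\e}$ is chosen precisely to control the boundary singularity of $\psi$.

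Finally I would extract $q_1=q_2$ from the reduced identity by the same density of products of harmonic functions used for Theorem~\ref{secondthm}, namely \cite{Ferreira2009}. Since $m\geq2$, I would keep two slots, say $v_{f_1},v_{f_2}$, free and fix the remaining ones equal to a single nonzero harmonic function $v_0$ with $\spt(v_0|_{\partial\Om})\subset\Gamma$, so that the identity reads $\int_\Om g\,v_{f_1}v_{f_2}\,dx=0$ with $g=(q_1-q_2)\,\psi\,v_0^{m-2}\in L^1(\Om)$. Density of the products $v_{f_1}v_{f_2}$ of harmonic functions vanishing on $\partial\Om\setminus\Gamma$ then forces $g=0$ a.e.\ in $\Om$. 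As $\Om$ is connected and $\psi,v_0$ are harmonic and not identically zero, each is real-analytic with zero set of Lebesgue measure zero, so $\psi v_0^{m-2}\neq0$ almost everywhere; hence $q_1-q_2=0$ a.e., i.e.\ $q_1=q_2$ in $\Om$. I expect the only genuinely delicate step to be the integrability/Fubini point in the middle paragraph; the linearization and the density input are routine adaptations of the arguments already used for Theorems~\ref{mainthm} and~\ref{secondthm}.
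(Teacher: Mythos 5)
Your overall route is the same as the paper's: linearize at zero, pair the resulting identity with the Poisson extension of $\mu$ (your $\psi$ is the paper's $\Psi$; the Green/Fubini step you sketch is exactly the content of Lemma \ref{measure lemma}, taken from \cite[Lemma 2.1]{Salo2022}), use $(n+\e)'<\tfrac{n}{n-1}$ for integrability, and finish with a density argument plus non-vanishing of $\psi$. However, there is one genuine gap, and it sits precisely at the step you call routine. You apply the density theorem of \cite{Ferreira2009} to conclude that $g=(q_1-q_2)\,\psi\, v_0^{m-2}=0$ a.e. That theorem says the span of products of harmonic functions vanishing on $\partial\Om\setminus\Gamma$ is dense in $L^1(\Om)$; equivalently, no nonzero \emph{bounded} function annihilates all such products. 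What you need is that no nonzero \emph{integrable} function annihilates them, which is a strictly stronger statement (an annihilator in $L^1\setminus L^\infty$ is not excluded by $L^1$-density of the test family). Your $g$ is genuinely unbounded: as you yourself note, $\psi$ blows up like $|x-x_0|^{1-n}$ at the boundary, so $g$ lies only in $L^s(\Om)$ for some $s>1$ close to $1$. This is exactly why the paper does not cite \cite{Ferreira2009} here but instead invokes Theorem 1.3 of \cite{Salo2022} (see also \cite[Section 4]{Carstea2021}), an $L^1$ version of the density/uniqueness result, applied after choosing $f_1,f_2$ smooth so that $v^1,v^2$ are smooth up to the boundary by elliptic regularity. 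With that citation substituted, your argument closes; without it, the key step is unsupported. Ironically, the Fubini/integrability point you flagged as the delicate one is fine.

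Two smaller points. First, you assert that $\mu\not\equiv 0$ implies $\psi\not\equiv 0$ without proof; this needs an argument, and the paper supplies one via the Poisson kernel lower bound $P(x,y)\geq c\,\dist(x,\partial\Om)\,|x-y|^{-n}$ from \cite{Krantz2005}. Second, your finish --- $\psi$ and $v_0$ are real-analytic in the connected set $\Om$, so their zero sets are Lebesgue-null unless they vanish identically --- is a correct and arguably more elementary alternative to the paper's ending, which takes $f_3,\dots,f_m\geq 0$ so that $v^k>0$ by the maximum principle and then rules out vanishing of $\Psi$ on a set of positive measure via the measure-theoretic unique continuation results of \cite{Harrach2017}, \cite{Harrach2019}. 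Either finish works once the density step is repaired.
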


The proof of this theorem is very similar to the one in \cite{Salo2022} and it uses heavily the identity \eqref{intro int identity} and a density result for solutions of the Laplacian \cite{Salo2022}.

It is an interesting question if in Theorems \ref{mainthm} and \ref{secondthm} it is enough to assume the potential $q$ to be in $L^{\frac{n}{2}}(\Om)$ and if in Theorem \ref{thirdthm} the potential $q$ could be in $L^s(\Om)$ for $s=n$ or even $s<n$. The argument given for Theorems \ref{mainthm} and \ref{secondthm} fails when $q\in L^{\frac{n}{2}}(\Om)$ since the well-posedness (Theorem \ref{wellposed}) relies on Sobolev embedding theorems that fail for the exponent $\frac{n}{2}$. For Theorem \ref{thirdthm} the restriction to $s>n$ comes from Lemma \ref{measure lemma} and that we again use Sobolev embedding theorems that do not work for the exponent $n$ or exponents less than $n$.

The rest of this paper is organized as follows. In section \ref{section well-posedness} we prove the well-posedness of the boundary value problem \eqref{intro mainbvp}. In sections \ref{section proof of main} to \ref{section proof of thirdthm} the proofs for Theorems \ref{mainthm}, \ref{secondthm} and \ref{thirdthm} are given.
\\

\textbf{Acknowledgements.} The author was supported by the Finnish Centre of Excellence in Inverse Modelling and Imaging (Academy of Finland grant 284715).
The author would like to thank the anonymous referee for helpful comments and Mikko Salo for helpful discussions on everything related to inverse problems.

\section{Well-posedness}\label{section well-posedness}

A short reminder for the reader that we denote here and in the rest of this article $p:=\frac{n}{2}+\e$.

\begin{thm}\label{wellposed}(Well-posedness)
	Let $\Om\subset\Rnn, n\geq2,$ be a bounded open set with $C^{\infty}$ boundary, $\e>0$ and let $q\in L^{p}(\Om).$ Then there exist $\delta, C>0$ such that for any 
	\[f\in U_{\delta}:=\{h\in \Wreuna\colon ||h||_{\Wreuna}<\delta\},\]
	there is a unique small solution $u_f$ in the class $\{v\in W^{2,p}(\Om)\colon ||w||_{W^{2,p}(\Om)}\leq C\delta\}$ of the boundary value problem
	\begin{equation}\label{eqschr}
		\left\{\begin{array}{ll}
			\Delta u+qu^m=0, & \text{in}\,\, \Om \\
			u=f, & \text{on}\,\, \partial\Om,
		\end{array} \right.
	\end{equation}
	where $m\in\N$ and $m\geq2$.
	Moreover
	\[||u||_{W^{2,p}(\Om)}\leq C||f||_{\Wreuna},\]
	and there are $C^{\infty}$ maps
	\begin{align*}
		S&\colon U_{\delta}\to W^{2,p}(\Om),\quad f\mapsto u_f,\\
		\Lambda_q&\colon U_{\delta}\to W^{1-\frac{1}{p},p}(\partial\Om),\quad f\mapsto \partial_{\nu}u_f|_{\partial\Om}.
	\end{align*}
\end{thm}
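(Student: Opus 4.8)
The plan is to derive every assertion from a single application of the implicit function theorem for $C^\infty$ maps between Banach spaces. I would introduce
\[
F\colon W^{2,p}(\Om)\times\Wreuna \to L^p(\Om)\times\Wreuna,\qquad F(u,f)=\bigl(\Delta u+qu^m,\; u|_{\partial\Om}-f\bigr),
\]
so that $u_f$ solves \eqref{eqschr} exactly when $F(u_f,f)=0$, and note $F(0,0)=0$. Once $F$ is shown to be well defined and smooth with $D_uF(0,0)$ invertible, the implicit function theorem produces $\delta>0$, a neighbourhood $U_\delta$ of $0$, and a unique $C^\infty$ solution map $S\colon U_\delta\to W^{2,p}(\Om)$, $f\mapsto u_f$, with $S(0)=0$; this simultaneously yields existence, uniqueness in a small ball, and smooth dependence on $f$.

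The first and main point is that $F$ genuinely maps into $L^p(\Om)$ and is smooth despite $q$ being only $L^p$. This is where the hypothesis $p=\tfrac{n}{2}+\e>\tfrac n2$ enters: since $2p>n$, the Sobolev embedding gives $W^{2,p}(\Om)\hookrightarrow L^\infty(\Om)$, and moreover $W^{2,p}(\Om)$ is a Banach algebra, so $u\mapsto u^m$ is a continuous polynomial (hence $C^\infty$) map $W^{2,p}(\Om)\to W^{2,p}(\Om)\hookrightarrow L^\infty(\Om)$. Multiplication by the fixed $q\in L^p(\Om)$ is bounded from $L^\infty(\Om)$ into $L^p(\Om)$, with $\|qu^m\|_{L^p}\le\|q\|_{L^p}\|u\|_{L^\infty}^m$, so $u\mapsto qu^m$ is a smooth map $W^{2,p}(\Om)\to L^p(\Om)$. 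The trace part $u\mapsto u|_{\partial\Om}-f$ is bounded linear into $\Wreuna$ by the trace theorem, so $F$ is $C^\infty$.

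Next I would linearize at the origin. Because $m\ge2$, the term $qu^m$ vanishes to order $m\ge2$ at $u=0$, whence
\[
D_uF(0,0)v=\bigl(\Delta v,\; v|_{\partial\Om}\bigr);
\]
that is, the partial derivative is simply the Dirichlet realization of the Laplacian. By the standard $L^p$ solvability theory for the Dirichlet problem (Agmon--Douglis--Nirenberg / Calder\'on--Zygmund estimates on the smooth domain $\Om$), the map $v\mapsto(\Delta v,\, v|_{\partial\Om})$ is an isomorphism $W^{2,p}(\Om)\to L^p(\Om)\times\Wreuna$. This is exactly the nondegeneracy the implicit function theorem requires, and it holds for the free Laplacian with no eigenvalue hypothesis precisely thanks to $m\ge2$.

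The implicit function theorem then delivers the solution map $S$, and differentiating the relation $\Delta S(f)+q\,S(f)^m=0$ together with the isomorphism above identifies $DS(0)$ as the Dirichlet solution operator, giving $\|u_f\|_{W^{2,p}(\Om)}\le C\|f\|_{\Wreuna}$ for $\|f\|_{\Wreuna}$ small. Finally, $\Lambda_q$ is the composition of $S$ with the bounded linear normal-trace map $W^{2,p}(\Om)\to W^{1-\frac1p,p}(\partial\Om)$, $u\mapsto\partial_\nu u|_{\partial\Om}$, so it is $C^\infty$. The only genuinely delicate step is the one in the second paragraph: controlling the rough factor $q$ by pairing it with the $L^\infty$ (Banach-algebra) structure of $W^{2,p}(\Om)$, which is the whole reason the argument needs $p>\tfrac n2$ rather than $p=\tfrac n2$.
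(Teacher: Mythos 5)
Your proposal is correct and follows essentially the same route as the paper: the same map $F(u,f)=(\Delta u+qu^m,\,u|_{\partial\Om}-f)$, the same key estimate $\|qu^m\|_{L^p}\le\|q\|_{L^p}\|u\|_{L^\infty}^m$ via the Sobolev embedding $W^{2,p}(\Om)\hookrightarrow L^\infty(\Om)$ (valid since $2p>n$), invertibility of the linearization $v\mapsto(\Delta v,\,v|_{\partial\Om})$, and the implicit function theorem for Banach spaces. The only cosmetic difference is that you quote the Agmon--Douglis--Nirenberg $L^p$ isomorphism directly, whereas the paper deduces the homeomorphism from bijectivity (Gilbarg--Trudinger Theorem 9.15), boundedness, and the open mapping theorem.
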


The proof uses the implicit function theorem between Banach spaces \cite[Theorem $10.6$ and Remark $10.5$]{MR2028503} and is very similar to the one in \cite[Proposition $2.1$]{MR4188325}. The difference here is that we replace Hölder spaces with Sobolev spaces and one needs to be careful with various embeddings for these spaces.

\begin{proof}
	Let
	\[X=\Wreuna,\quad Y=W^{2,p}(\Om),\quad Z=L^{p}(\Om)\times\Wreuna\]
	and $F\colon X\times Y\to Z,$
	\[F(f,u)=(Q(u),u|_{\partial\Om}-f),\]
	where $Q(u)=\Delta u+qu^m.$ Let us now show that $F$ has the claimed mapping property. Since $u\in W^{2,p}(\Om)$, this implies that $u|_{\partial\Om}\in\Wreuna$ (see \cite{Leoni2017}) and $\Delta u\in L^{p}(\Om)$. Hence we need to show that the term $qu^m\in L^{p}(\Om)$. Since $2\left(\frac{n}{2}+\e\right)>n$, then by the Sobolev embedding theorem \cite{Leoni2017} $u\in C^{0,\alpha}(\bar{\Om})$, for $0<\alpha<1$, which is a subset of $L^s(\Om)$ for every $1\leq s\leq\infty$. 
%
	Now this implies
	\begin{equation*}
		||qu^m||_{L^{p}(\Om)}\leq||q||_{L^{p}(\Om)}||u^m||_{L^{\infty}(\Om)}\leq||q||_{L^{p}(\Om)}\left(||u||_{L^{\infty}(\Om)}\right)^m<\infty
	\end{equation*}
	and thus $qu^m\in L^{p}(\Om)$. Hence $F$ has the claimed mapping property.
	
	Next we want to show that $F$ is a $C^{\infty}$ mapping. Since $u\mapsto \Delta u$ is a linear map $W^{2,p}(\Om)\to L^{p}(\Om)$, it is enough to show that $u\mapsto qu^m$ is a $C^{\infty}$ map $W^{2,p}(\Om)\to L^{p}(\Om)$. This follows since $u^m$ is a polynomial. More precisely, let $u,v\in W^{2,p}(\Om)$ and use the Taylor formula:
	\begin{align*}
		q(u+v)^m&=\sum_{j=0}^{m}\frac{\partial_u^j(qu^m)}{j!}v^j+\int_{0}^{1}\frac{\partial_u^{m+1}\left(q(u+tv)^m\right)}{m!}v^{m+1}(1-t)\,dt\\
		&=\sum_{j=0}^{m}\frac{\partial_u^j(qu^m)}{j!}v^j.
	\end{align*}
	Now for $||v||_{W^{2,p}(\Om)}\leq1$ the above gives
	\begin{align*}
		\Bigg|\Bigg|q(u+v)^m-\sum_{j=0}^{m}\frac{\partial_u^j(qu^m)}{j!}v^j\Bigg|\Bigg|_{L^{p}(\Om)}=0\leq||v||_{W^{2,p}(\Om)}^{k+1}
	\end{align*}
	and thus the map $u\mapsto q(x)u^m$ is $C^{k}$ (in the sense of \cite[Definition $10.2$]{MR2028503}) for all $k\in\N$. Hence it is a $C^{\infty}$ map and $F$ is also $C^{\infty}$ .
	
	Our aim is to use the implicit function theorem for Banach spaces to get a unique solution for the boundary value problem \eqref{eqschr}. Firstly, the linearization of $F$ at $(0,0)$ in the second variable is
	\begin{equation*}
		D_uF|_{(0,0)}(v)=(\Delta v,v|_{\partial\Om}),
	\end{equation*}
	which is linear and also $F(0,0)=0$. Secondly, $D_uF|_{(0,0)}\colon Y\to Z$ is a homeomorphism. To see this, let $(\phi,g)\in Z$ and consider the boundary value problem
	\begin{equation*}
		\left\{\begin{array}{ll}
			\Delta v=\phi, & \text{in}\,\, \Om \\
			v=g, & \text{on}\,\, \partial\Om.
		\end{array} \right.
	\end{equation*}
	This problem has a unique solution for each pair $(\phi, g)$ (see for example \cite[Theorem $9.15$]{GilbargTrudinger}), and thus $D_uF|_{(0,0)}$ is bijective. We also have the estimate
	\begin{equation*}
		||D_uF|_{(0,0)}(v)||^2_Z=||\Delta v||^2_{L^{p}(\Om)}+||v|_{\partial\Om}||_{\Wreuna}^2\leq M||v||^2_{W^{2,p}(\Om)},
	\end{equation*}
	because the trace operator from $W^{2,p}(\Om)$ to $\Wreuna$ is bounded (see \cite{Leoni2017}). Hence $D_uF|_{(0,0)}$ is also bounded and then the open mapping theorem (see  e.g. \cite[Theorem $8.33$]{MR2028503}) tells us that it is also a homeomorphism. 
	
	Now by the implicit function theorem \cite[Theorem $10.6$]{MR2028503} there exists $\delta>0$, a neighborhood $U_{\delta}=B(0,\delta)\subset X$ and a $C^{\infty}$ map $S\colon U_{\delta}\to Y$ such that $F(f,S(f))=0$ for $||f||_{\Wreuna}\leq\delta$. Now $S$ is also Lipschitz continuous, $S(0)=0, S(f)=u$ and thus we have
	\begin{equation*}
		||u||_{W^{2,p}(\Om)}\leq C||f||_{\Wreuna}
	\end{equation*}
	for $C>0$.
	By redefining $\delta$ if necessary we have the estimates $||f||_{\Wreuna}\leq\delta$,  $||u||_{W^{2,p}(\Om)}\leq C\delta$ and the implicit function theorem gives that $u$ is the unique small solution of $F(f,u)=0$. Also the solution operator $S\colon U_{\delta}\to W^{2,p}(\Om)$ is a $C^{\infty}$ map. Because $u\in W^{2,p}(\Om)$, then $\nabla u\in W^{1,p}(\Om)$. The trace operator is a bounded linear map from $W^{1,p}(\Om)$ to $W^{1-\frac{1}{p},p}(\partial\Om)$ (see \cite{Leoni2017}) and thus $\partial_{\nu}u\in W^{1-\frac{1}{p},p}(\partial\Om)$ is defined almost everywhere on $\partial\Om$. Hence $\Lambda_q$ is a well defined $C^{\infty}$ map between $U_{\delta}$ and $W^{1-\frac{1}{p},p}(\partial\Om)$.
\end{proof}

\begin{rmk}
	In the previous proof, we showed that the mapping $D_uF|_{(0,0)}$ is bijective and bounded and deduced that it is a homeomorphism. An alternative way to see this is to look at the inverse map $(D_uF|_{(0,0)})^{-1}\colon Z \to Y$ and show that it is bijective and bounded. In order to do this, one needs to prove the following estimate:
	\begin{equation*}
		||v||_{W^{2,p}(\Om)}\leq C\left(||\phi||_{L^p(\Om)}+||g||_{W^{2-\frac{1}{p},p}(\partial\Om)}\right),
	\end{equation*}
	where $C>0$ does not depend on $v$, $\phi$ and $g$. This can be done for example by combining the estimate
	\begin{equation*}
		||v||_{W^{2,p}(\Om)}\leq C\left(||\phi||_{L^p(\Om)}+||g||_{W^{2-\frac{1}{p},p}(\partial\Om)}+||v||_{L^p(\Om)}\right)
	\end{equation*}
	from \cite[Theorem $9.1.3$]{Wu2006} with the assumption that $0$ is not a Dirichlet eigenvalue and using a compactness argument.
\end{rmk}

\section{Proof of Theorem \ref{mainthm}}\label{section proof of main}

Using the method of higher order linearization we prove that it is possible to determine a potential in $L^{p}(\Om)$ from the knowledge of full DN map.

\begin{proof}[Proof of Theorem \ref{mainthm}]
	Let $\lambda_1,\ldots,\lambda_m$ be sufficiently small numbers, $\lambda=(\lambda_1,\ldots,\lambda_m)$ and $f_1,\ldots,f_m\in \Wreuna$. Let $u_j(x,\lambda)\in W^{2,p}(\Om)$ be the unique small solution to
	\begin{equation}\label{mainproofbvp}
		\left\{\begin{array}{ll}
			\Delta u_j+q_ju_j^m=0, & \text{in}\,\, \Om \\
			u_j=\sum_{k=1}^{m}\lambda_kf_k, & \text{on}\,\, \partial\Om.
		\end{array} \right.
	\end{equation}
	Differentiating this with respect to $\lambda_l, l\in\{1,\ldots,m\}$ (possible by Theorem \ref{wellposed} which shows that $S$ is a $C^{\infty}$ map) and setting $\lambda=0$ gives that $v_j^l:=\partial_{\lambda_l}u_j(x,\lambda)|_{\lambda=0}$ satisfies
	\begin{equation}\label{laplace}
		\left\{\begin{array}{ll}
			\Delta v_j^l=0, & \text{in}\,\, \Om \\
			v_j^l=f_l, & \text{on}\,\, \partial\Om.
		\end{array} \right.
	\end{equation}
	This has a unique solution in $W^{2,p}(\Om)$ (see for example \cite[Theorem $9.15$]{GilbargTrudinger}) and thus we can define $v^l:=v_1^l=v_2^l.$ Also the first linearizations of the DN maps $\Lambda_{q_j}$ are the DN maps of the Laplace equation.
	
	Let $1<a\leq m-1$ be an integer and $l_1,\ldots,l_a\in\{1,\ldots,m\}$. Then the $a$-th order linearization of \eqref{mainproofbvp} is
	\begin{equation*}
	\left\{\begin{array}{ll}
		\Delta(\partial_{\lambda_{l_1}}\cdots\partial_{\lambda_{l_a}}u_j(x,\lambda)|_{\lambda=0})=0, & \text{in}\,\, \Om \\
		\partial_{\lambda_{l_1}}\cdots\partial_{\lambda_{l_a}}u_j(x,\lambda)|_{\lambda=0}=0, & \text{on}\,\, \partial\Om.
	\end{array} \right.
	\end{equation*}
	and uniqueness of solutions for the Laplace equation gives that $0$ is the only solution. Thus the $a$-th order linearization of the DN maps $\Lambda_{q_j}$ are equal to $0$.
	
	Moving to the $m$-th order linearization, we apply $\partial_{\lambda_1}\cdots\partial_{\lambda_m}|_{\lambda=0}$ to \eqref{mainproofbvp} which results in the boundary value problem
	\begin{equation}\label{product}
		\left\{\begin{array}{ll}
			\Delta w_j=-m!q_j\prod_{k=1}^{m}v^k, & \text{in}\,\, \Om \\
			w_j=0, & \text{on}\,\, \partial\Om.
		\end{array} \right.
	\end{equation}
	Here $w_j=\partial_{\lambda_1}\cdots\partial_{\lambda_m}u_j(x,\lambda)|_{\lambda=0}$ and the functions $v^k, k\in\{1,\ldots,m\},$ are solutions to equation \eqref{laplace} with corresponding boundary values $f_k$. On the left hand side of \eqref{product} we are only left with a product of functions $v^k$, since after differentiating \eqref{mainproofbvp} $m$ times with respect to $\e$, all other terms involve a positive power of $u_j$. Proposition \ref{wellposed} says that the solution $u_j$ depends smoothly on $\e$ and thus when evaluating at $\e=0$, the function $u_j$ vanishes.
	
	By our assumptions we have that $\Lambda_{q_1}\left(\sum_{k=1}^m\lambda_kf_k\right)=\Lambda_{q_2}\left(\sum_{k=1}^m\lambda_kf_k\right)$ and thus $\partial_{\nu}u_1|_{\partial\Om}=\partial_{\nu}u_2|_{\partial\Om}$. Applying $\partial_{\lambda_1}\cdots\partial_{\lambda_m}|_{\lambda=0}$ to this gives $\partial_{\nu}w_1|_{\partial\Om}=\partial_{\nu}w_2|_{\partial\Om}$. Subtracting \eqref{product} for $j=1,2$ and integrating against $v\equiv1$ (a solution of \eqref{laplace}) over $\Om$ implies
	\begin{equation}\label{int identity}
		\int_{\Om}m!(q_1-q_2)\prod_{k=1}^{m}v^k\,dx=-\int_{\Om}\Delta(w_1-w_2)\,dx=-\int_{\partial\Om}\partial_{\nu}(w_1-w_2)\,dS=0.
	\end{equation}
	Let us now choose $v^1,v^2$ to be the Calderón's exponential solutions \cite{MR590275}
	\begin{equation}\label{exp sol.}
		v^1(x):=e^{(\eta+i\xi)\cdot x},\quad v^2(x):=e^{(-\eta+i\xi)\cdot x},
	\end{equation}
	where $\eta, \xi\in\Rnn$, $\eta\perp\xi$ and $|\eta|=|\xi|$, and $v^k\equiv1$ for $k=3,\ldots,m$. Then we get that the Fourier transform of the difference $q_1-q_2$ at $-2\xi$ vanishes. Thus $q_1=q_2$ since $\xi$ was arbitrary. 
\end{proof}

Notice that this proof gives a reconstruction formula for the potential. In particular, inspecting the last lines after equation \eqref{int identity} we have the following result which reconstructs the potential $q$ via its Fourier transform.

\begin{cor}\label{corollary reconstruction}
	Let $\Om\subset\Rnn$, $n\geq2$, be a bounded open set with $C^{\infty}$ boundary, $\e>0$ and $q\in L^{p}(\Om)$. Let $\Lambda_q$ be the DN map associated to the boundary value problem
	\begin{equation*}
		\left\{\begin{array}{ll}
			\Delta u+qu^m=0, & \text{in}\,\, \Om \\
			u=f, & \text{on}\,\, \partial\Om.
		\end{array} \right.
	\end{equation*}
	Then, denoting $\lambda=(\lambda_1,\ldots,\lambda_m)$,
	\begin{equation*}
		\hat{q}(-2\xi)=-\frac{1}{m!}\int_{\partial\Om}\frac{\partial^m}{\partial\lambda_1\cdots\partial\lambda_m}\big|_{\lambda=0}\Lambda_q\left(\sum_{k=1}^{m}\lambda_kf_k\right)\, dS,
	\end{equation*}
	where $f_1, f_2$ are the boundary values of Calderón's exponential solutions \eqref{exp sol.}, $f_k\equiv1$ for $3\leq k\leq m$ and $\hat{q}$ is the Fourier transform of $q$.
\end{cor}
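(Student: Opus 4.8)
The plan is to rerun the higher-order linearization from the proof of Theorem \ref{mainthm}, but now for a single potential $q$ rather than for a difference $q_1-q_2$. First I would fix small parameters $\lambda=(\lambda_1,\ldots,\lambda_m)$ and boundary data $f_1,\ldots,f_m\in\Wreuna$, let $u(x,\lambda)\in W^{2,p}(\Om)$ be the unique small solution furnished by Theorem \ref{wellposed} with boundary value $\sum_{k=1}^m\lambda_k f_k$, and apply $\partial_{\lambda_1}\cdots\partial_{\lambda_m}|_{\lambda=0}$. Exactly as in the derivation of \eqref{product}, every term carrying a positive power of $u$ vanishes at $\lambda=0$, so only the full product of the first linearizations survives, and the function $w:=\partial_{\lambda_1}\cdots\partial_{\lambda_m}u(x,\lambda)|_{\lambda=0}$ solves
\begin{equation*}
	\left\{\begin{array}{ll}
		\Delta w=-m!\,q\prod_{k=1}^m v^k, & \text{in }\Om\\
		w=0, & \text{on }\partial\Om,
	\end{array}\right.
\end{equation*}
where each $v^k$ is the harmonic extension of $f_k$ as in \eqref{laplace}. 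These differentiations are legitimate precisely because Theorem \ref{wellposed} guarantees that $S$, and hence $\Lambda_q$, is a $C^\infty$ map on $U_\delta$.

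Next I would identify the quantity appearing on the right-hand side of the claimed formula. Since $\Lambda_q(f)=\partial_\nu u_f|_{\partial\Om}$, commuting the $\lambda$-derivatives with the normal trace yields
\begin{equation*}
	\frac{\partial^m}{\partial\lambda_1\cdots\partial\lambda_m}\Big|_{\lambda=0}\Lambda_q\Big(\sum_{k=1}^m\lambda_k f_k\Big)=\partial_\nu w|_{\partial\Om}.
\end{equation*}
Now I integrate the equation for $w$ against the constant function $1$, which is harmonic. Because $w\in W^{2,p}(\Om)$ and $1$ is smooth, Green's identity applies; using $w|_{\partial\Om}=0$ and $\partial_\nu 1=0$ it collapses to $\int_\Om\Delta w\,dx=\int_{\partial\Om}\partial_\nu w\,dS$, exactly as in the step following \eqref{int identity}. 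Substituting the displayed PDE gives
\begin{equation*}
	-m!\int_\Om q\prod_{k=1}^m v^k\,dx=\int_{\partial\Om}\frac{\partial^m}{\partial\lambda_1\cdots\partial\lambda_m}\Big|_{\lambda=0}\Lambda_q\Big(\sum_{k=1}^m\lambda_k f_k\Big)\,dS.
\end{equation*}

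Finally I specialize the boundary data to Calderón's exponential solutions \eqref{exp sol.}: take $v^1(x)=e^{(\eta+i\xi)\cdot x}$ and $v^2(x)=e^{(-\eta+i\xi)\cdot x}$ with $\eta\perp\xi$ and $|\eta|=|\xi|$ (so both are harmonic), and $v^k\equiv1$ for $3\le k\le m$. Then $\prod_{k=1}^m v^k=v^1v^2=e^{2i\xi\cdot x}$, so, normalizing the Fourier transform by $\hat q(\zeta)=\int_{\R^n}q(x)e^{-i\zeta\cdot x}\,dx$ and extending $q$ by zero outside $\Om$, the left-hand integral is $-m!\int_\Om q(x)e^{2i\xi\cdot x}\,dx=-m!\,\hat q(-2\xi)$. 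Dividing by $-m!$ produces the stated reconstruction formula. The only point requiring any care is convergence and the validity of the integration by parts at this low regularity, but this is immediate: the exponential solutions are smooth and bounded on the bounded set $\Om$, so $q\prod_k v^k\in L^p(\Om)\subset L^1(\Om)$ and $w\in W^{2,p}(\Om)$, whence every integral above is finite and Green's identity holds. Thus the corollary is a direct bookkeeping specialization of the computation already performed for Theorem \ref{mainthm}, and I expect no genuine obstacle.
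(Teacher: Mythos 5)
Your proposal is correct and follows essentially the same route as the paper: the corollary is obtained by rerunning the $m$-th order linearization argument of Theorem \ref{mainthm} for a single potential, integrating the equation for $w$ against the harmonic function $1$, and then specializing to Calder\'on's exponential solutions so that $\prod_{k}v^k=e^{2i\xi\cdot x}$. Your sign bookkeeping and the identification $\partial_\nu w|_{\partial\Om}=\frac{\partial^m}{\partial\lambda_1\cdots\partial\lambda_m}\big|_{\lambda=0}\Lambda_q\left(\sum_{k=1}^m\lambda_kf_k\right)$ both match what the paper extracts from the lines following \eqref{int identity}.
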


\section{Proof of Theorem \ref{secondthm}}\label{section proof of secondthm}

We prove the partial data result for determining a potential in $L^{p}(\Om)$ by using higher order linearization. The proof uses similar techniques as in \cite{MR4052205} and \cite{MR4269409}.

\begin{proof}[Proof of Theorem \ref{secondthm}]
	Let $\lambda_1,\ldots,\lambda_m$ be sufficiently small numbers, $\lambda=(\lambda_1,\ldots,\lambda_m)$ and $f_1,\ldots,f_m\in \Wreuna$ with $\spt(f)\subset\Gamma$. Let $u_j(x,\lambda)\in W^{2,p}(\Om)$ be the unique small solution to
	\begin{equation*}
		\left\{\begin{array}{ll}
			\Delta u_j+q_ju_j^m=0, & \text{in}\,\, \Om \\
			u_j=\sum_{k=1}^{m}\lambda_kf_k, & \text{on}\,\, \partial\Om.
		\end{array} \right.
	\end{equation*} 
	The first and $m$-th order linearizations are the same as in the proof of Theorem \ref{mainthm}, with corresponding boundary values. We also define $v^l:=v_1^l=v_2^l$ by uniqueness of solutions to \eqref{laplace}. Let $v^{(0)}$ be the solution to
	\begin{equation*}
		\left\{\begin{array}{ll}
			\Delta v^{(0)}=0, & \text{in}\,\, \Om \\
			v^{(0)}=0, & \text{on}\,\, \partial\Om\setminus\Gamma\\
			v^{(0)}=g, & \text{on}\,\, \Gamma,
		\end{array} \right.
	\end{equation*}
	where $g\in C^{\infty}_c(\Gamma)$ with $g$ non-negative and not identically zero. By the maximum principle, $v^{(0)}>0$ in $\Om$. Then subtracting \eqref{product} for $j=1,2$ and integrating against $v^{(0)}$ gives
	the following integral identity (compare to \eqref{int identity})
	\begin{align}\label{int identity 2}
		-\int_{\Om}m!(q_1-q_2)v^{(0)}\prod_{k=1}^{m}v^k\,dx&=\int_{\Om}\Delta(w_1-w_2)v^{(0)}\,dx\\\notag
		&=\int_{\Om}(w_1-w_2)\Delta v^{(0)}\,dx \\\notag
		&+ \int_{\partial\Om}v^{(0)}\partial_{\nu}(w_1-w_2)-(w_1-w_2)\partial_{\nu}v^{(0)}\,dS\\\notag
		&=\int_{\partial\Om}v^{(0)}\partial_{\nu}(w_1-w_2)-(w_1-w_2)\partial_{\nu}v^{(0)}\,dS
	\end{align}
	Here Green's formula and the fact that $\Delta v^{(0)}=0$ in $\Om$ were used.
	Now our assumption on the DN maps coinciding gives $\partial_{\nu}u_1|_{\Gamma}=\partial_{\nu}u_2|_{\Gamma}$ and when applying $\partial_{\lambda_1}\cdots\partial_{\lambda_m}|_{\lambda=0}$ to this, we have $\partial_{\nu}w_1|_{\Gamma}=\partial_{\nu}w_2|_{\Gamma}$.  Also $w_1-w_2=0$ on $\partial\Om$ by \eqref{product} and $v^{(0)}=0$ on $\partial\Om\setminus\Gamma$. Using these \eqref{int identity 2} becomes
	\begin{align}\label{int identity 3}
		-\int_{\Om}m!(q_1-q_2)v^{(0)}\prod_{k=1}^{m}v^k\,dx&=\int_{\partial\Om}v^{(0)}\partial_{\nu}(w_1-w_2)-(w_1-w_2)\partial_{\nu}v^{(0)}\,dS\\\notag
		&=\int_{\partial\Om\setminus\Gamma}v^{(0)}\partial_{\nu}(w_1-w_2)\,dS + \int_{\Gamma}v^{(0)}\partial_{\nu}(w_1-w_2)\,dS\\\notag
		&=0.
	\end{align}
	Now we can apply Theorem 1.3 in \cite{Salo2022} (see also \cite[Section 4]{Carstea2021}) which says that the set of products of two harmonic functions that vanish on $\partial\Om\setminus\Gamma$ is dense in $L^1(\Om)$. Thus we can conclude from \eqref{int identity 3} that
	\begin{equation*}
		m!(q_1-q_2)v^{(0)}\prod_{k=3}^{m}v^k=0\quad \text{in}\va \Om.
	\end{equation*}
	Let $f_k\in C_c^{\infty}(\Gamma)$, $f_k$ non-negative and $f_k>0$ somewhere for $k=3,\ldots,m$. Then again the maximum principle gives that $v^k>0$ in $\Om$. Combining this with $v^{(0)}>0$ in $\Om$ then implies $q_1=q_2\quad \text{in}\va \Om.$
\end{proof}

\section{Proof of Theorem \ref{thirdthm}}\label{section proof of thirdthm}

As in \cite{Salo2022}, we need a lemma stating that the solution to the boundary value problem with a finite Borel measure $\mu$ as boundary value is in $L^r(\Om)$ for $1\leq r<\frac{n}{n-1}$. For the lemma, denote by $r'$ the dual exponent of $1\leq r\leq \infty$.

\begin{lemma}\label{measure lemma}
	Let $\Om\subset\Rnn$, $n\geq2$ be a bounded open set with $C^{\infty}$ boundary and $\mu$ a finite complex Borel measure on $\partial\Om$. Then for the function
	\begin{equation}\label{Psi}
		\Psi(x)=\int_{\partial\Om}P(x,y)\,d\mu(y),\quad x\in\Om,
	\end{equation}
	where $P(x,y)$ is the Poisson kernel for $\Delta$ in $\Om$, we have $\Psi\in L^r(\Om)$, $1\leq r<\frac{n}{n-1}$. Additionally $\Psi$ solves the boundary value problem
	\begin{equation*}
		\left\{\begin{array}{ll}
			\Delta \Psi=0, & \text{in}\,\, \Om \\
			\Psi=\mu, & \text{on}\,\, \partial\Om,
		\end{array} \right.
	\end{equation*}
	where $\Psi=\mu$ on $\partial\Om$ means that for any $w\in W^{2,r'}(\Om)$ with $w|_{\partial\Om}=0$, in trace sense, one has
	\begin{equation}\label{boundary value measure meaning}
		\int_{\partial\Om}\partial_{\nu}w\,d\mu=\int_{\Om}(\Delta w)\Psi\,dx.
	\end{equation}
\end{lemma}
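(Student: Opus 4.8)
The plan is to establish the three assertions---interior harmonicity, the $L^r$ bound, and the weak boundary identity \eqref{boundary value measure meaning}---one at a time, all resting on the pointwise estimate for the Poisson kernel of a smooth bounded domain,
\begin{equation*}
	|P(x,y)|\leq C\,\frac{\dist(x,\partial\Om)}{|x-y|^n},\qquad x\in\Om,\ y\in\partial\Om,
\end{equation*}
which follows from the representation $P(x,y)=-\partial_{\nu_y}G(x,y)$ together with the standard Green's function estimates available for $C^\infty$ domains. Since $x\mapsto P(x,y)$ is harmonic in $\Om$ for each fixed $y\in\partial\Om$, harmonicity of $\Psi$ is obtained by differentiating under the integral sign: on a compact subset $K$ of $\Om$ one has $|x-y|\geq\dist(K,\partial\Om)>0$, so $P(\cdot,y)$ and its $x$-derivatives up to order two are bounded uniformly in $y\in\partial\Om$ and $x\in K$, and as $\mu$ is finite this gives $\Delta\Psi(x)=\int_{\partial\Om}\Delta_x P(x,y)\,d\mu(y)=0$.

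For the $L^r$ bound, Minkowski's integral inequality reduces matters to a uniform estimate on $\|P(\cdot,y)\|_{L^r(\Om)}$:
\begin{equation*}
	\|\Psi\|_{L^r(\Om)}\leq\int_{\partial\Om}\|P(\cdot,y)\|_{L^r(\Om)}\,d|\mu|(y).
\end{equation*}
Because $y\in\partial\Om$ forces $\dist(x,\partial\Om)\leq|x-y|$, the kernel estimate yields
\begin{equation*}
	\int_\Om|P(x,y)|^r\,dx\leq C^r\int_\Om\frac{\dist(x,\partial\Om)^r}{|x-y|^{nr}}\,dx\leq C^r\int_\Om\frac{dx}{|x-y|^{(n-1)r}},
\end{equation*}
and the last integral is finite, uniformly in $y\in\partial\Om$, exactly when $(n-1)r<n$, i.e. $r<\tfrac{n}{n-1}$. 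Finiteness of $\mu$ then gives $\Psi\in L^r(\Om)$ for all $1\leq r<\tfrac{n}{n-1}$.

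For the weak boundary identity I would fix $r\in(1,\tfrac{n}{n-1})$, so that $r'\in(n,\infty)$ and $W^{2,r'}(\Om)\hookrightarrow C^1(\bar{\Om})$; then both sides of \eqref{boundary value measure meaning} make sense, since $\Delta w\in L^{r'}(\Om)$, $\Psi\in L^r(\Om)$, and $\partial_\nu w$ is continuous on $\partial\Om$. The argument is by approximation: pick smooth densities $g_\e$ on $\partial\Om$ with $g_\e\,dS\to\mu$ weak-$*$ and total masses bounded by $C\|\mu\|$, and let $\Psi_\e(x)=\int_{\partial\Om}P(x,y)g_\e(y)\,dS(y)$ be the classical harmonic extension of $g_\e$, smooth up to $\partial\Om$. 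Green's second identity, with $\Delta\Psi_\e=0$ and $w|_{\partial\Om}=0$, gives
\begin{equation*}
	\int_\Om(\Delta w)\Psi_\e\,dx=\int_{\partial\Om}(\partial_\nu w)\,\Psi_\e\,dS=\int_{\partial\Om}(\partial_\nu w)\,g_\e\,dS.
\end{equation*}
As $\e\to0$ the right-hand side converges to $\int_{\partial\Om}\partial_\nu w\,d\mu$ by continuity of $\partial_\nu w$ and weak-$*$ convergence, while on the left the bound above gives $\sup_\e\|\Psi_\e\|_{L^r(\Om)}<\infty$ and, since $P(x,\cdot)\in C(\partial\Om)$ for fixed $x$, the pointwise limit $\Psi_\e(x)\to\Psi(x)$; these combine (for $r>1$) to $\Psi_\e\rightharpoonup\Psi$ weakly in $L^r(\Om)$, so that $\int_\Om(\Delta w)\Psi_\e\,dx\to\int_\Om(\Delta w)\Psi\,dx$ against $\Delta w\in L^{r'}(\Om)$. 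This proves \eqref{boundary value measure meaning}, and by the inclusions $W^{2,s'}(\Om)\subset W^{2,r'}(\Om)$ it extends to the remaining admissible test functions.

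The routine computations aside, I expect the main obstacle to be the limiting argument for the boundary identity: upgrading pointwise convergence plus the uniform $L^r$ bound to genuine weak $L^r$ convergence (where restricting to $r>1$ to stay in a reflexive space is the clean way out), and arranging the regularizations $g_\e$ so that their masses stay bounded while $g_\e\,dS\to\mu$ weak-$*$. The Poisson kernel bound itself is standard for smooth domains, so it would simply be cited.
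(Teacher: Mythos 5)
Your argument is correct, but it takes a genuinely different route from the proof the paper relies on. The paper does not prove Lemma \ref{measure lemma} itself: it states that the proof is identical to that of Lemma 2.1 in \cite{Salo2022}, the only modification being that test functions are taken in $W^{2,r'}(\Om)$ rather than $C^2(\bar{\Om})$. That argument shares your first two steps (the kernel bound $|P(x,y)|\leq C\dist(x,\partial\Om)|x-y|^{-n}$, harmonicity by differentiation under the integral sign, and the Minkowski-type $L^r$ estimate), but it obtains the weak boundary identity directly: by Fubini,
\[
\int_{\Om}(\Delta w)\Psi\,dx=\int_{\partial\Om}\Bigl(\int_{\Om}\Delta w(x)P(x,y)\,dx\Bigr)\,d\mu(y),
\]
so everything reduces to the pointwise kernel identity $\int_{\Om}\Delta w(x)P(x,y)\,dx=\partial_{\nu}w(y)$ for each fixed $y\in\partial\Om$, which is proved via the Green's function representation of $w$ (equivalently, Green's identity with the singular boundary point $y$ excised). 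You avoid this singular-kernel computation entirely by regularizing the measure instead: $g_\e\,dS\to\mu$ weak-$*$ with bounded masses, Green's identity applied against the smooth harmonic extensions $\Psi_\e$, and a weak-compactness limit in $L^r$ for $r>1$, with the remaining test class $r'=\infty$ recovered by inclusion. The trade-off is clear: your route replaces the delicate boundary-singularity argument by soft functional analysis (uniform $L^r$ bounds, identification of the weak limit through pointwise convergence, reflexivity), whereas the Fubini route needs no compactness, treats all $r\geq1$ on the same footing, and yields the identity in one pass. A pleasant by-product of your method is that the paper's stated modification comes for free: since Green's second identity is applied with the smooth factor $\Psi_\e$, admitting $w\in W^{2,r'}(\Om)$ with zero trace instead of $w\in C^2(\bar{\Om})$ costs only the density of $C^{\infty}(\bar{\Om})$ in $W^{2,r'}(\Om)$ --- a point you should state explicitly at the step where Green's identity is invoked, as it is the one place where the Sobolev (rather than classical) regularity of $w$ enters.
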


Notice that the left hand side of relation \eqref{boundary value measure meaning} is well defined since $\partial_{\nu}w$ is continuous by the Sobolev embedding theorem (see for example \cite{Leoni2017}): The assumption $w\in W^{2,r'}(\Om)$ says that $\nabla w\in W^{1,r'}(\Om)$. This space embeds to $C^{0,1-\frac{n}{r'}}(\bar{\Om})$ if $r'>n$. Notice that $r'>n$ is equivalent with the assumption that $1\leq r<\frac{n}{n-1}$. Also the right hand side of \eqref{boundary value measure meaning} is well defined by the fact that $\Delta w\in L^{r'}(\Om),\Psi\in L^{r}(\Om)$ implies $(\Delta w)\Psi\in L^1(\Om)$.

The proof of this lemma is the same as in \cite[Lemma 2.1.]{Salo2022}. The only difference when compared to the statement in \cite{Salo2022}, is that we assume $w\in W^{2,r'}(\Om)$ instead of $w\in C^2(\bar{\Om})$.

\begin{proof}[Proof of Theorem \ref{thirdthm}]
	As before, we use the method of higher order linearization. Let $\lambda_1,\ldots,\lambda_m$ be sufficiently small numbers, $\lambda=(\lambda_1,\ldots,\lambda_m)$ and $f_1,\ldots,f_m\in \Wreuna$ with $\spt(f)\subset\Gamma$. Let $u_j(x,\lambda)\in W^{2,p}(\Om)$ be the unique small solution to
	\begin{equation*}
		\left\{\begin{array}{ll}
			\Delta u_j+q_ju_j^m=0, & \text{in}\,\, \Om \\
			u_j=\sum_{k=1}^{m}\lambda_kf_k, & \text{on}\,\, \partial\Om.
		\end{array} \right.
	\end{equation*} 
	The first and $m$-th order linearizations are the same as in the proof of Theorem \ref{mainthm}, with corresponding boundary values. We also define $v^l:=v_1^l=v_2^l$ by uniqueness of solutions to \eqref{laplace}.
	
	Let $\e>0$ and $q_1, q_2\in L^{n+\e}(\Om)$ be such that \eqref{measure eq} holds for all $f\in U_{\delta}$, $\spt(f)\subset\Gamma$ with sufficiently small $\delta$. From $\partial_{\lambda_1}\cdots\partial_{\lambda_m}\Lambda_{q_j}(f)=\partial_{\lambda_1}\cdots\partial_{\lambda_m}\partial_{\nu}u_j|_{\partial\Om}=\partial_{\nu}w_j|_{\partial\Om}$, where $w_j$ is the solution to \eqref{product}, and equation \eqref{measure eq} we get that
	\begin{equation*}
		\int_{\partial\Om}(\partial_{\nu}w_1-\partial_{\nu}w_2)\,d\mu=0.
	\end{equation*}
	
	Let $\Psi\in L^{(n+\e)'}(\Om)$ be the function given by \eqref{Psi} which is a solution to 
	\begin{equation*}
		\left\{\begin{array}{ll}
			\Delta \Psi=0, & \text{in}\,\, \Om \\
			\Psi=\mu, & \text{on}\,\, \partial\Om
		\end{array} \right.
	\end{equation*}
	in the sense of Lemma \ref{measure lemma}. Notice that $(n+\e)'<\frac{n}{n-1}$ and $w_j\in W^{2,n+\e}(\Om)$ because $-m!q_j\prod_{k=1}^{m}v^k\in L^{n+\e}(\Om)$ (see for example \cite[Theorem $9.15$]{GilbargTrudinger}). Thus combining \eqref{boundary value measure meaning} and \eqref{product} gives
	\begin{align*}
		0 = \int_{\partial\Om}(\partial_{\nu}w_1-\partial_{\nu}w_2)\,d\mu = \int_{\Om}\Delta(w_1-w_2)\Psi\,dx = -\int_{\Om}m!(q_1-q_2)\prod_{k=1}^{m}v^k\Psi\,dx,
	\end{align*}
	where each $v^k$ is a solution to the Laplace equation with corresponding boundary value $f_k$. Let $f_3,\ldots, f_m\in C^{\infty}(\partial\Om)$ be such that $\spt(f_k)\subset\Gamma$, $f_k\geq0$ and $f_k>0$ somewhere, then by the maximum principle $v^k>0$ in $\Om$. Choosing the boundary values $f_1, f_2\in C^{\infty}(\partial\Om)$, $\spt(f_1), \spt(f_2)\subset\Gamma$ , we get by elliptic regularity that $v^1, v^2$ are smooth and thus we may apply Theorem $1.3$ from \cite{Salo2022} (see also \cite[Section 4]{Carstea2021}) to get
	\begin{equation*}
		m!(q_1-q_2)v_3\cdots v_m\Psi=0\quad\text{a.e. in}\va\Om.
	\end{equation*}
	
	The positivity of $v_3,\ldots,v_m$ implies that $(q_1-q_2)\Psi=0$ a.e. in $\Om$. Now we claim that $\Psi$ cannot vanish in any set $E\subset\Om$ of positive measure. This can be seen as follows: We argue by contradiction  and assume that $\Psi=0$ in $E\subset\Om$ where $E$ has positive measure. Then by a unique continuation principle (see for example \cite{Harrach2017}, $n>2$, and for $n=2$ \cite{Harrach2019}) $\Psi=0$ in $\Om$. From \cite{Krantz2005} there is a constant $c>0$ such that for all $(x,y)\in\Om\times\partial\Om$
	\begin{equation*}
		c\cdot\frac{\dist(x,\partial\Om)}{|x-y|^n}\leq P(x,y).
	\end{equation*}
	In view of the definition of $\Psi$ in \eqref{Psi} this would imply that $\mu\equiv0$ which is a contradiction. 
	Hence we must have that $q_1=q_2$ a.e. in $\Om$.
\end{proof}

\printbibliography
\end{document}